\newtheorem{thm}{Theorem}[section]
\newtheorem{lem}[thm]{Lemma}
\newtheorem{prop}[thm]{Proposition}
\theoremstyle{definition}
\newtheorem{defin}[thm]{Definition}
\newtheorem{rem}[thm]{Remark}
\newtheorem{exa}[thm]{Example}
\newtheorem*{xrem}{Remark}
\numberwithin{equation}{section}
\begin{document}

\baselineskip=17pt



\title[Partial integrability of almost complex structures]{Partial integrability of almost complex structures on Thurston manifolds}

\author[O. Mushkarov]{Oleg Mushkarov}
\address{Institute of Mathematics and Informatics\\ Bulgarian Academy of Sciences\\
Acad. G.Bonchev Str. Bl. 8\\ 1113 Sofia\\ Bulgaria, and South-West
University, 2700 Blagoevgrad\\ Bulgaria}

\email{muskarov@math.bas.bg}

\author[Ch. Yankov]{Christian Yankov}
\address{Department of Mathematical Sciences \\
            Eastern Connecticut State University \\
            Willimantic, CT 06226, USA}
\email{yankovc@easternct.edu}

\date{}

\begin{abstract}
We prove that any left-invariant symplectic almost complex structure
on a Thurston manifold which is compatible with its canonical
left-invariant Riemannian metric has holomorphic type $1$.
\end{abstract}

\subjclass[2010]{Primary 32Q60; Secondary 22E30, 53D05}

\keywords{almost complex manifold, symplectic structure, holomorphic
function, holomorphic type, generalized Thurston manifold}

\maketitle

\section{Introduction}
\label{intro}
Let $M$ be a smooth $2n$-dimensional almost complex manifold with
almost complex structure $J$.  A smooth complex-valued function $f$
on $(M,J)$ is said to be holomorphic, if
 $\overline{\partial}f=0$, i.e. $df$ is a $(1,0)$-form with respect
to $J$ \cite{He,Mus1}. It is well-known that $J$ is integrable (i.e.
comes from a complex structure), if and only if in a neighborhood of
every point of $M$ there exist $n$ functionally independent
holomorphic functions. By the celebrated Newlander-Nirenberg theorem
\cite{NN} this is equivalent to the vanishing of the Nijenhuis
tensor of $J$. In contrast to complex manifolds, there exist almost
complex manifolds which do not admit even local holomorphic
functions except constants. A typical example is the 6-sphere $S^6$
with the almost complex structure defined by the Cayley numbers, and
more generally, every isotropy irreducible homogeneous almost
complex space with non-integrable almost complex structure
\cite{He}. Another class of examples is given by the compact
orientable hypersurfaces of $\mathbb{R}^7$ with the Calabi almost
complex structures \cite{Ca}. We refer the reader to \cite{Mus1,HK}
for some general criteria for local existence of $k$ $(0\leq k\leq
n)$ functionally independent holomorphic functions on an arbitrary
almost complex manifold.

Given an almost complex manifold $(M,J)$ and a point $x\in M$ we
denote by $m(x)$ the maximal number of local holomorphic functions
that are independent at $x$. In general $m(x)$ is not a constant,
hence we introduce the following

\begin{defin} The almost complex structure $J$ is said to be
{\it partially integrable} if $m(x)=m=const$ for all $x\in M$. In
this case we say that $J$ has {\it holomorphic type} $m$.
\end{defin}
\noindent One may find various examples of partially integrable
almost complex manifolds in \cite{Mus1,DM}. In particular, any
left-invariant almost complex structure on a Lie group is partially
integrable.

In 1976 Thurston \cite{Thu}  constructed the first example of a
compact symplectic manifold $W$ which admits no K\"ahler structures.
It is well-known \cite{Ab} that $W$ is a $4$-dimensional nilmanifold
and in \cite {Mus2} one of the authors noticed that every
left-invariant almost complex structure on the corresponding
$2$-step nilpotent Lie group has nonzero holomorphic type. Motivated
by this example he introduced the class of real Lie groups  that
have this property (calling them Lie groups of type $T$), and
exhibited many examples of such groups \cite {Mus3}. In particular,
any Lie group of dimension $2n$ whose commutator ideal has dimension
less than $n$ is of type $T$. Recently the authors proved \cite{MY}
that this algebraic property characterizes the $2$-step nilpotent
Lie groups of type $T$.

In this note we study the higher dimensional generalizations
$W^{2n+2}$ of the Thurston example introduced by Cordero,
Fern\'andez and de L\'eon \cite{CoFeLe} (see also  \cite{CoFeGr}).
More precisely, in Theorem \ref{thm:main theorem} we describe all
left-invariant symplectic almost complex structures on $W^{2n+2}$
which are compatible with its canonical left-invariant Riemannian
metric, and prove that each of these structures has holomorphic
type $1$. This generalizes a result of Kim (\cite{Km}, Theorem
2.7) (see also (\cite{Mus3}, Proposition 3.2)) for the standard
symplectic structure on $W^{2n+2}$. Note that an almost complex
structure $J$ compatible with a Riemannian metric $g$ is called
{\it symplectic} if the K\"ahler $2$-form $F(X,Y)=g(JX,Y)$ is
closed.

\section{Lie groups of type $T$}
Let $G$ be a connected Lie group and let $\mathfrak g$ be the Lie
algebra of left-invariant vector fields on $G$, which we identify as
usual with the tangent space to $G$ at its unit element. We also
identify the left-invariant almost complex structures on $G$ with
the endomorphisms $J$ of $\mathfrak g$, such that $J^2  = -Id$. The
Nijenhuis tensor $N$ of $J$ is defined by
\begin{displaymath}
     N(X,Y) = [X,Y] + J[J X,Y] + J[X,J Y] - [J X,J Y]
\end{displaymath}
for all  $X,Y \in \mathfrak g.$ We denote by $\mathfrak l\mathfrak
n$ the Nijenhuis space of $J$ defined by
\begin{displaymath}
     \mathfrak l\mathfrak n = Span \{N(X,Y) , \ X,Y \in \mathfrak g\}.
\end{displaymath}
It is easy to see that $N(X,Y) = -N(Y,X)$ and $N(JX,Y) = -JN(X,Y)$.
Hence $\mathfrak l\mathfrak n$ is a $J$-invariant subspace of
$\mathfrak g$, and thus its real dimension is even. Note that the
almost complex structure $J$ is integrable if and only if $\mathfrak
l\mathfrak n = \{0\}$.

As we already mentioned in the introduction, any left-invariant
almost complex structure $J$ on a Lie group $G$ is partially
integrable. Note that the computation of its holomorphic type can be
reduced to a purely algebraic problem as follows.
\begin{defin}
\label{defin:IJ-subalgebra} A Lie subalgebra $\mathfrak h$ of
$\mathfrak g$ is said to be an $IJ$-subalgebra if:
\begin{enumerate}
    \item $\mathfrak h$ is $J$-invariant
    \item $\mathfrak l\mathfrak n \subseteq \mathfrak h$
    \item $[X,Y] + J[X,JY] \in \mathfrak h$, whenever $X \in \mathfrak h$ and $Y \in \mathfrak g$.
\end{enumerate}
\end{defin}
\noindent Denote by $d(\mathfrak g,J)$ the real dimension of the
intersection of all $IJ$-sub\-al\-ge\-bras of $\mathfrak g$. It
has been proved in \cite{Mus3} that the holomorphic type of $J$ is
equal to  $\displaystyle \frac{1}{2}(\dim \mathfrak g-d(\mathfrak
g,J))$. In particular, we have the following

\begin{prop}
\label{prop:IJ} If the Nijenhuis space $\mathfrak l\mathfrak n$ of
$J$ is an $IJ$-subalgebra of $\mathfrak g$, then the holomorphic
type of $J$ is equal to $\displaystyle \frac{1}{2}(\dim \mathfrak
g-\dim \mathfrak l\mathfrak n )$.
\end{prop}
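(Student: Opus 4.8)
The plan is to deduce Proposition~\ref{prop:IJ} directly from the characterization of the holomorphic type recalled just above it, namely that the holomorphic type of $J$ equals $\frac{1}{2}(\dim\mathfrak g - d(\mathfrak g,J))$, where $d(\mathfrak g,J)$ is the real dimension of the intersection of all $IJ$-subalgebras of $\mathfrak g$. Thus the entire proposition reduces to establishing a single numerical identity: under the hypothesis that $\mathfrak{ln}$ is itself an $IJ$-subalgebra, one must show that $d(\mathfrak g,J)=\dim\mathfrak{ln}$. Substituting this into the formula immediately yields the claimed value $\frac{1}{2}(\dim\mathfrak g-\dim\mathfrak{ln})$ for the holomorphic type.

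To prove $d(\mathfrak g,J)=\dim\mathfrak{ln}$, I would argue that $\mathfrak{ln}$ is the smallest $IJ$-subalgebra, so that the intersection of all $IJ$-subalgebras is exactly $\mathfrak{ln}$. The key observation is condition~(2) in Definition~\ref{defin:IJ-subalgebra}: every $IJ$-subalgebra $\mathfrak h$ must contain $\mathfrak{ln}$. Therefore the intersection of all $IJ$-subalgebras automatically contains $\mathfrak{ln}$, giving $d(\mathfrak g,J)\geq\dim\mathfrak{ln}$. For the reverse containment, I would use the hypothesis: since by assumption $\mathfrak{ln}$ is an $IJ$-subalgebra, it is one of the subalgebras being intersected, so the intersection of all of them is contained in $\mathfrak{ln}$, giving $d(\mathfrak g,J)\leq\dim\mathfrak{ln}$. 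Combining the two inclusions yields that the intersection equals $\mathfrak{ln}$ precisely, hence $d(\mathfrak g,J)=\dim\mathfrak{ln}$.

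The argument is essentially a formal consequence of the extremal (minimality) characterization encoded in the definition, so there is no genuine analytic or computational obstacle; the main point requiring care is simply to confirm that condition~(2) forces containment in one direction while the hypothesis supplies membership in the intersecting family in the other. In other words, the proposition is the special case of the general formula in which the lower bound $\mathfrak{ln}$ for all $IJ$-subalgebras is itself admissible as an $IJ$-subalgebra, so that the infimum is attained. I would state this cleanly as a two-line inclusion argument and then invoke the quoted result from \cite{Mus3} to convert the dimension of the intersection into the holomorphic type.
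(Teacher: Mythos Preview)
Your proposal is correct and follows essentially the same approach as the paper: both argue that, by condition~(2) of Definition~\ref{defin:IJ-subalgebra}, every $IJ$-subalgebra contains $\mathfrak{ln}$, and since $\mathfrak{ln}$ is itself assumed to be an $IJ$-subalgebra, the intersection of all $IJ$-subalgebras equals $\mathfrak{ln}$, whence $d(\mathfrak g,J)=\dim\mathfrak{ln}$ and the formula from \cite{Mus3} gives the result. Your write-up is slightly more explicit about the two inclusions, but the argument is identical.
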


\begin{proof} Note that if the Nijenhuis space $\mathfrak l\mathfrak n$ of $J$
is an $IJ$-subalgebra of  $\mathfrak g$ then by property (2) of
Definition \ref{defin:IJ-subalgebra} it follows that the
intersection of all $IJ$-subalgebras of $\mathfrak g$ is $\mathfrak
l\mathfrak n$. Hence $d(\mathfrak g,J)=\dim \mathfrak l\mathfrak n $
and the holomorphic type of $J$ is equal to $\displaystyle
\frac{1}{2}(\dim \mathfrak g-\dim \mathfrak l\mathfrak n )$.
\end{proof}

Let us recall \cite{Mus3} that a Lie group $G$ is said to be of type
$T$, if every left-invariant almost complex structure $J$ on $G$ has
non-zero holomorphic type. As a consequence of the above formula for
the holomorphic type we obtain the following algebraic condition for
a Lie group to be of type $T$.

\begin{prop}

\cite{Mus3} Every $2n$-dimensional Lie group whose Lie algebra has
commutator ideal of dimension less than $n$ is of type $T$.

\label{theorem:referred proposition}
\end{prop}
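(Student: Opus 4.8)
The plan is to reduce the statement to producing a single proper $IJ$-subalgebra. Recall the formula from \cite{Mus3}, quoted just before Proposition~\ref{prop:IJ}: the holomorphic type of a left-invariant almost complex structure $J$ equals $\frac{1}{2}\bigl(\dim\mathfrak g-d(\mathfrak g,J)\bigr)$, where $d(\mathfrak g,J)$ is the dimension of the intersection of all $IJ$-subalgebras of $\mathfrak g$. Hence $J$ has zero holomorphic type exactly when $d(\mathfrak g,J)=\dim\mathfrak g=2n$, i.e. when $\mathfrak g$ itself is the only $IJ$-subalgebra. To prove that $G$ is of type $T$ it therefore suffices to show that for \emph{every} endomorphism $J$ of $\mathfrak g$ with $J^2=-Id$ there exists a proper $IJ$-subalgebra $\mathfrak h\subsetneq\mathfrak g$; this gives $d(\mathfrak g,J)\le\dim\mathfrak h<2n$ and a positive holomorphic type.

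The candidate I would use is the smallest $J$-invariant subspace containing the commutator ideal,
\[
  \mathfrak h=[\mathfrak g,\mathfrak g]+J[\mathfrak g,\mathfrak g].
\]
The motivation is that every bracket lies in $[\mathfrak g,\mathfrak g]$, so in particular the Nijenhuis tensor $N(X,Y)=[X,Y]+J[JX,Y]+J[X,JY]-[JX,JY]$ does, whence $\mathfrak l\mathfrak n\subseteq[\mathfrak g,\mathfrak g]\subseteq\mathfrak h$. I would then check the three conditions of Definition~\ref{defin:IJ-subalgebra} together with the subalgebra property. Invariance follows from $J^2=-Id$, which gives $J\mathfrak h=J[\mathfrak g,\mathfrak g]+[\mathfrak g,\mathfrak g]=\mathfrak h$; the subalgebra property is automatic since $[\mathfrak h,\mathfrak h]\subseteq[\mathfrak g,\mathfrak g]\subseteq\mathfrak h$; and condition (2) was noted above. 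For condition (3), with $X\in\mathfrak h$ and $Y\in\mathfrak g$ one has $[X,Y]\in[\mathfrak g,\mathfrak g]\subseteq\mathfrak h$ and $[X,JY]\in[\mathfrak g,\mathfrak g]$, so $J[X,JY]\in J[\mathfrak g,\mathfrak g]\subseteq\mathfrak h$ and the sum lies in $\mathfrak h$.

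The argument closes with a dimension count, and this is where the hypothesis is used: writing $V=[\mathfrak g,\mathfrak g]$, we have $\dim\mathfrak h=\dim(V+JV)\le 2\dim V<2n=\dim\mathfrak g$, since $\dim[\mathfrak g,\mathfrak g]<n$. Thus $\mathfrak h$ is a proper $IJ$-subalgebra, $d(\mathfrak g,J)<2n$, and every $J$ has positive holomorphic type. There is no serious obstacle here: the verifications are routine, and the only point requiring care is recognizing that the bound $\dim[\mathfrak g,\mathfrak g]<n$ is precisely what keeps the $J$-saturation $V+JV$ from filling up $\mathfrak g$, so that this one subalgebra works uniformly in $J$.
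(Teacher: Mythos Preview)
Your approach is exactly the one in the paper: exhibit $\mathfrak h=[\mathfrak g,\mathfrak g]+J[\mathfrak g,\mathfrak g]$ as an $IJ$-subalgebra of dimension at most $2n-2$ and invoke the formula for the holomorphic type. One small slip: you write ``$\mathfrak l\mathfrak n\subseteq[\mathfrak g,\mathfrak g]$'', but $N(X,Y)$ contains the terms $J[JX,Y]$ and $J[X,JY]$, which lie in $J[\mathfrak g,\mathfrak g]$ rather than $[\mathfrak g,\mathfrak g]$; the correct (and sufficient) containment is $\mathfrak l\mathfrak n\subseteq[\mathfrak g,\mathfrak g]+J[\mathfrak g,\mathfrak g]=\mathfrak h$, which is what you need anyway.
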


\begin{proof} The given condition implies that
$[\mathfrak g,\mathfrak g]+J[\mathfrak g,\mathfrak g]$ is an $IJ$-
subalgebra of $\mathfrak g$ whose dimension is less than or equal to
$2n-2$. Hence $d(\mathfrak g,J)\leq 2n-2$ and by Proposition
\ref{prop:IJ} the holomorphic type of $J$ is at least $1$.
\end{proof}

Recall that a Lie group $G$ with Lie algebra $\mathfrak g$ is said
to be \textit{2-step nilpotent}, if $[\mathfrak g ,[\mathfrak
g,\mathfrak g]]=0$. These Lie groups admit rich geometric structures
and provide interesting examples of complex and symplectic manifolds
\cite{Sal}. It turns out that for them the sufficient condition in
Proposition \ref{theorem:referred proposition} for a Lie group to be
of type $T$ is also necessary \cite{MY}.

\begin{exa} The Thurston example mentioned in the introduction can be
generalized to higher dimensions by using the so-called generalized
Heisenberg groups $H(q,p)$ \cite{CoFeGr}. They consist of all real
matrices of the form
\begin{displaymath}
\begin{pmatrix}
I_p & A & B \\
0 & I_q & C \\
0 & 0 & I_q \\
\end{pmatrix}
\end{displaymath}
where $I_p$ denotes the identity $p \times p$ matrix, $A$ and $B$
are arbitrary $p \times q$ matrices, and $C$ is a diagonal $q \times
q$ matrix. Note that $$\dim H(q,p)= 2qp + q.$$ Let $p_1, p_2, \dots,
p_n$ be non-negative integers and $q_1, q_2, \dots, q_n$ be positive
integers such that $q_1 + q_2 + \dots + q_n \equiv 0\pmod 2$.
Consider the group $$W(q_1,p_1,\dots,q_n,p_n) = H(q_1,p_1)
\times\dots\times H(q_n,p_n).$$ Let $\mathfrak w$ be the Lie algebra
of $W(q_1,p_1,\dots,q_n,p_n)$. Then $$\displaystyle \dim [\mathfrak
w, \mathfrak w] = \sum_{i=1}^n q_i p_i,$$ which implies $\dim
\mathfrak w \ge 2\dim [\mathfrak w, \mathfrak w] +2.$ Hence, by
Proposition \ref{theorem:referred proposition}, the group
$W(q_1,p_1,\dots,q_n,p_n)$ is of type $T$.
\end{exa}

\begin{rem} It has been proved in \cite{MY} that the condition $\displaystyle \dim
[\mathfrak g,\mathfrak g] \le n-1$ is also necessary for  $4$ and
$6$-dimensional nilpotent Lie groups to be of type $T$. This raises
the question whether the same is also true for nilpotent Lie groups
of higher dimensions. Note that for general Lie groups the answer to
this question is negative as shown by the following
\begin{exa} Let $G$ be the solvable Lie group whose Lie algebra $\mathfrak g$ has a
basis $(e_1, \dots, e_{2n})$ such that
$$\mathfrak g:[e_i, e_{2n}] = e_i, \ 1 \le i \le 2n-1.$$ It has been proved in
\cite{Mus3} (see also \cite{Sal}), that every left-invariant almost
complex structure on $G$ is integrable. In particular, $G$ is of
type $T$ but it does not satisfy the above condition since
$\displaystyle \dim [\mathfrak g,\mathfrak g] =2n-1>n-1.$ This shows
that the answer to the question above is negative for the class of
solvable Lie groups.

Let us note that J. Milnor \cite{Mi} has also considered the Lie
groups in this example (in odd dimensions too) and characterized
them by the property that every left-invariant Riemannian metric has
sectional curvature of constant sign.
\end{exa}
\end{rem}

\section{ Partial integrability on generalized Thurston manifolds}

Let $H^{2n+2}$ be the Lie group of matrices of the form

\begin{displaymath}
M=
\begin{pmatrix}
I_n & P & Q&0 \\
0&1& r &0\\
0& 0  & 1 &0 \\
0& 0  & 0 &e^{2\pi is} \\
\end{pmatrix}
\end{displaymath}
where $I_n$ is the unit $n\times n$ matrix, $P=(p_i)$ and $Q=(q_i)$
are $n\times 1$ real matrices, and $r,s$ are real numbers. Then
$H^{2n+2}$ is a connected and simply-connected 2-step nilpotent Lie
group of dimension $2n+2$. Denote by $W^{2n+2}$ the compact
nilmanifold $H^{2n+2}/\Gamma$, where $\Gamma$ is the subgroup of
$H^{2n+2}$ consisting of all matrices with integer entries. Since
the group $H^{2n+2}$ is not abelian it follows from the
Benson-Gordon theorem \cite{BG} that $W^{2n+2}$ does not admit
K\"ahler structures (see also \cite{CoFeLe,CoFeGr}). We will see
instead that it admits a large family of left-invariant symplectic
almost complex structures.
\par Denote by $(x_1, x_2, \dots, x_{2n+2})$ the global coordinates on $H^{2n+2}$ defined by
\begin{displaymath}
x_i(M)= p_i, \  x_{n+1}(M)=r, \ x_{n+1+i}(M)=q_{i}, \
x_{2n+2}(M)=s
\end{displaymath}
for $M \in H^{2n+2}$ and $1\le i \le n$. The following 1-forms on
$H^{2n+2}$ are left-invariant:
\begin{displaymath}
\alpha_i= dx_i, \ \alpha_{n+1}= dx_{n+1}, \ \alpha_{n+1+i} =
dx_{n+1+i} - x_idx_{n+1}, \ \alpha_{2n+2}=dx_{2n+2}.
\end{displaymath}
It is easy to check that
\begin{equation}
\label{eq:1} \quad d\alpha_i=0 \ , \  d\alpha_{n+1+i} =
\alpha_{n+1}\wedge \alpha_i \ , \  1\le i \le n+1.
\end{equation}
Let $X^i$ be the dual vector field of $\alpha_i$. Then
$$ X^i=\frac{\partial}{\partial x_i}, \
X^{n+1}=\frac{\partial}{\partial x_{n+1}}+ \sum_{i=1}^n x_i
\frac{\partial}{\partial x_{n+1+i}},$$
$$ X^{n+1+i}=
\frac{\partial}{\partial x_{n+1+i}}, \
X^{2n+2}=\frac{\partial}{\partial x_{2n+2}}, \ 1\le i \le n.$$
Denote by $\mathfrak h_{2n+2}$ the Lie algebra of $H^{2n+2}$. The
left invariant vector fields $X^1, \dots, X^{2n+2}$ form a basis of
$\mathfrak h_{2n+2}$ and the only nonzero Lie brackets are
\begin{equation}
\label{eq:2} [X^i, X^{n+1}] = X^{n+1+i}, \  1 \le i \le n.
\end{equation}

Denote by $g$ the left-invariant Riemannian metric on $H^{2n+2}$
defined by

$$ g = \sum_{i=1}^{2n+2} \alpha_i \otimes \alpha_i.$$
\noindent It defines a Riemannian metric on $W^{2n+2}$ which we
denote by the same symbol. Let $J$ be a left-invariant almost
complex structure on $H^{2n+2}$ compatible with $g$. Set

$$ JX^i = \sum_{j=1}^{2n+2} a_j^i X^j, \ 1 \le i \le 2n+2. $$

\noindent Then the $(2n+2) \times (2n+2)$ matrix $J = (a_j^i)$ is
orthogonal and antisymmetric. The K\"ahler form $F$ of the almost
Hermitian structure $(g, J)$ is given by
$$  F = \frac{1}{2} \sum_{i,j=1}^{2n+2} a^{i}_{j} \alpha_{i} \wedge \alpha_{j}.$$
The structure $(g,J)$ is {\it symplectic} if $dF=0$.

\begin{thm}
\label{thm:main theorem} Let $J$ be an almost complex structure on
$W^{2n+2}$ induced by a left-invariant almost complex structure on
the group $H^{2n+2}$ and compatible with the metric $g$. If the
almost Hermitian structure $(g,J)$ is symplectic then $J$ has
holomorphic type $1$.
\end{thm}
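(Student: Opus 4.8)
The plan is to use Proposition \ref{prop:IJ}: if I can show that the Nijenhuis space $\mathfrak{ln}$ is an $IJ$-subalgebra of dimension $2n$, then the holomorphic type of $J$ is $\frac{1}{2}(\dim\mathfrak h_{2n+2}-\dim\mathfrak{ln}) = \frac{1}{2}((2n+2)-2n)=1$, which is exactly what I want. So the whole proof reduces to two things: first, classifying which compatible $J$ are symplectic, and second, showing that for each such $J$ the Nijenhuis space is two-dimensional and is an $IJ$-subalgebra.

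Let me think about the structure.

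First I would write out what it means for $(g,J)$ to be symplectic. Since $J$ is $g$-orthogonal and antisymmetric, the matrix $(a^i_j)$ is both orthogonal and skew-symmetric (so $J^2=-Id$ is automatic together with orthogonality). I would compute $dF=0$ using the structure equations \eqref{eq:1}, i.e. $d\alpha_i=0$ for $i\le n$, $d\alpha_{n+1+i}=\alpha_{n+1}\wedge\alpha_i$ for $1\le i\le n$, and $d\alpha_{2n+2}=0$. Applying $d$ to $F=\frac12\sum a^i_j\,\alpha_i\wedge\alpha_j$ and collecting the terms coming from the nonzero $d\alpha_{n+1+i}$, the condition $dF=0$ becomes a system of linear relations on the entries $a^i_j$. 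I expect these relations to force many entries to vanish and to pin down the block structure of $J$ — essentially decoupling the ``commutator'' directions $X^{n+1+1},\dots,X^{n+1+n}$ (the span of $\mathfrak{ln}$) from the rest in a controlled way. This computation is routine but bookkeeping-heavy, and I would organize it by separating the indices into the base block $\{1,\dots,n\}$, the two special indices $n+1$ and $2n+2$, and the center block $\{n+2,\dots,2n+1\}$.

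Next, with the explicit form of the symplectic-compatible $J$ in hand, I would compute the Nijenhuis tensor $N(X,Y)=[X,Y]+J[JX,Y]+J[X,JY]-[JX,JY]$ on pairs of basis vectors, using that the only nonzero brackets are $[X^i,X^{n+1}]=X^{n+1+i}$ from \eqref{eq:2}. Because the bracket always lands in the center $Z=\mathrm{Span}\{X^{n+1+1},\dots,X^{n+1+n}\}$, every value of $N$ is a $J$-combination of center vectors, so $\mathfrak{ln}\subseteq Z+JZ$. I would then show, using the constraints from the symplectic condition, that $\mathfrak{ln}$ is exactly two-dimensional and $J$-invariant (its evenness is already guaranteed by $N(JX,Y)=-JN(X,Y)$). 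Verifying the three $IJ$-subalgebra axioms then comes down to: (1) $J$-invariance, already established; (2) $\mathfrak{ln}\subseteq\mathfrak{ln}$, trivial; and (3) checking $[X,Y]+J[X,JY]\in\mathfrak{ln}$ for $X\in\mathfrak{ln}$, $Y\in\mathfrak h_{2n+2}$, which I would verify directly from the bracket relations. That $\mathfrak{ln}$ is a subalgebra is automatic here since it sits inside the abelian center.

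I expect the main obstacle to be the first step: showing that the symplectic condition genuinely forces $\dim\mathfrak{ln}=2$ rather than $0$ for every compatible symplectic $J$, and simultaneously extracting enough structure on the matrix $(a^i_j)$ to make the $IJ$-axiom (3) verification clean. The delicate point is ruling out the integrable case — one must confirm that no $g$-compatible symplectic $J$ on $H^{2n+2}$ has vanishing Nijenhuis tensor, equivalently that $\mathfrak{ln}\neq\{0\}$, so that the holomorphic type is $1$ and not $n+1$. This is where the interaction between the orthogonality of $J$, the skew-symmetry, and the closedness of $F$ must be exploited most carefully, since a priori an orthogonal $J$ could mix the base and center directions in a way that kills $N$; I would show the symplectic constraint precludes exactly this mixing.
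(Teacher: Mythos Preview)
Your overall approach matches the paper's: classify the $g$-compatible symplectic $J$ via $dF=0$ (the paper finds $J=\begin{pmatrix} O & A \\ -{}^{t}\!A & O \end{pmatrix}$ with $A\in O(n+1)$ and the upper-left $n\times n$ block of $A$ symmetric), compute $\mathfrak{ln}$, verify it is an $IJ$-subalgebra, and apply Proposition~\ref{prop:IJ}. However, your outline contains an internal inconsistency that would derail the computation if followed literally. In the opening you correctly aim for $\dim\mathfrak{ln}=2n$, but in the body you twice assert that $\mathfrak{ln}$ is ``two-dimensional'' and target $\dim\mathfrak{ln}=2$. The correct value is $2n$: with the block structure in hand one gets $N(X^i,X^{n+1})=X^{n+1+i}$ for $1\le i\le n$, so all of $Z=\mathrm{Span}\{X^{n+2},\dots,X^{2n+1}\}$ lies in $\mathfrak{ln}$, and then $\mathfrak{ln}=Z+JZ$ with $Z\cap JZ=\{0\}$ since the block form forces $JZ\subseteq\mathrm{Span}\{X^1,\dots,X^{n+1}\}$.

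Relatedly, your claim that $\mathfrak{ln}$ ``sits inside the abelian center'' is false: $JZ$ lies in $\mathrm{Span}\{X^1,\dots,X^{n+1}\}$, not in the center. The subalgebra property and condition~(3) of Definition~\ref{defin:IJ-subalgebra} do hold, but they require an honest check using the brackets \ref{eq:2} together with the block form of $J$ (this is exactly what the paper does); they are not automatic from centrality. Once these two points are corrected, your plan coincides with the paper's proof.
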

\begin{proof} We first describe all left-invariant almost complex
structures $J$ on $H^{2n+2}$ such that $(g,J)$ is a symplectic
structure.

\begin{lem}
\label{lemma:block-matrix type} The almost Hermitian structure
$(g,J)$ is symplectic if and only if the matrix of $J$ has the
block-matrix form
    \begin{equation}
\label{eq:3} \qquad J =
\begin{pmatrix}
O & A \\
-^{t}\!A & O\\
\end{pmatrix}
,
\end{equation}
where $A$ is an orthogonal $(n+1)\times (n+1)$ matrix of the form

    \begin{equation}
\label{eq:4} \qquad A =
\begin{pmatrix}
B  & \boldsymbol{\vdots} \\
\dots & .\\
\end{pmatrix}
,
\end{equation}
in which $B$ is a symmetric $n\times n$ matrix.
\end{lem}
\begin{proof}

Using the structure equations \ref{eq:1} and the fact that the
matrix $J = (a_j^i)$ is antisymmetric we get
$$  2dF =  \sum_{i,j=1}^{2n+2} a^{i}_{j} ( d\alpha_{i} \wedge \alpha_{j} - \alpha_{i} \wedge d\alpha_{j}) = $$
$$   =  \sum_{j=1}^{2n+2} \sum_{i=1}^{n} a^{n+i+1}_{j} \alpha_{i} \wedge \alpha_{j} \wedge \alpha_{n+1}
     + \sum_{i=1}^{2n+2} \sum_{j=1}^{n} a^{i}_{n+j+1} \alpha_{i} \wedge \alpha_{j} \wedge \alpha_{n+1} = $$
$$ =  2 \sum_{j=1}^{2n+2} \sum_{i=1}^{n} a^{n+i+1}_{j} \alpha_{i} \wedge \alpha_{j} \wedge \alpha_{n+1}.$$
Hence, $dF = 0$ if and only if $a_{n+i+1}^{j} = a_{n+j+1}^{i},  1
\le i, j \le n$,  and $a_{n+j+1}^{n+i+1} = 0 \ , 1 \le i \le n, 1 \le
j \le n+1.$  The matrix of $J$ is orthogonal and antisymmetric and
we see easily that it has the block-matrix form
    \begin{displaymath}
 \qquad J =
\begin{pmatrix}
C & A \\
-^{t}\!A & C\\
\end{pmatrix}
,
\end{displaymath}
where $C$ and $A$ are $(n+1)\times (n+1)$ matrices and $A$ has the
form  \ref{eq:4}.  The identity $J^2 = -I_{2n+2}$ implies
$$ C^2 - A \, ^{t}\!A = - I_{n+1}, \quad CA = O, \quad ^{t}\!A C = O, \quad ^{t}\!AA = I_{n+1}.$$
Hence $C =  A\, ^{t}\!A C = O$, which completes the proof.
\end{proof}

Now we are ready to prove the theorem

In view of Lemma \ref{lemma:block-matrix type}  we may assume that
the matrix of $J$ has the form \ref{eq:3}. Then
\begin{equation}
 \label{eq:5}
JX^{i} \in Span\{X^{n+2}, \dots, X^{2n+2}\}, \ 1 \le i\le n+1 ,
\end{equation}

\begin{equation}
\label{eq:6}
JX^{i} \in Span\{X^{1},  \dots, X^{n+1}\}, \  n+2\le
i\le 2n+2 .
\end{equation}
This, together with \ref{eq:2}, implies that
$$   N(X^{i}, X^{n+1}) = X^{n+i+1}, \  1 \le i \le n$$
and
$$   N(X^{i}, X^{j}) \in Span \{ X^{n+2},\dots, X^{2n+1},  JX^{n+2}, \dots, JX^{2n+1} \}$$
for $1 \le i, j \le 2n+2. $
Hence, the Nijenhuis space of $J$ is given by
$$\mathfrak{l}\mathfrak{n}= Span \{ X^{n+2}, \dots, X^{2n+1},  JX^{n+2}, \dots, JX^{2n+1} \}$$
 and it follows from \ref{eq:5} and \ref{eq:6} that $\dim \mathfrak{l}\mathfrak{n} = 2n$.  On the other
 hand, using \ref{eq:2}, \ref{eq:5}, and \ref{eq:6}, one can easily check that $\mathfrak{l}\mathfrak{n}$ satisfies property (3) of
 Definition \ref{defin:IJ-subalgebra}, while properties (1) and (2) are
 obvious. Thus $\mathfrak{l}\mathfrak{n}$
 is an $IJ$-subalgebra
 of $\mathfrak{h}_{2n+2}$ and using Proposition \ref{prop:IJ} we conclude  that the almost complex structure $J$ has holomorphic
type 1.
\end{proof}

\begin{xrem}
Theorem \ref{thm:main theorem} generalizes a result of Kim
(\cite{Km}, Theorem 2.7) (see also (\cite{Mus3}, Proposition 3.2))
that the symplectic almost complex structure on $W^{2n+2}$ defined
by Lemma \ref{lemma:block-matrix type} when $A$ is the identity
$(n+1)\times(n+1)$ matrix has holomorphic type $1$.
\end{xrem}

\subsection*{Acknowledgements}
The first author is partially supported by the National Science
Fund, Ministry of Education and Science of Bulgaria under contract
DFNI-I 02/14. The second author is partially supported by a 2015
ECSU-AAUP Faculty Research Grant.

\end{document}